\numberwithin{equation}{section}
\theoremstyle{plain}
\newtheorem{theorem}{Theorem}[section]
\newtheorem{lemma}[theorem]{Lemma}
\newtheorem{corollary}[theorem]{Corollary}
\theoremstyle{definition}
\newtheorem*{ack}{Acknowledgements}
\theoremstyle{remark}
\newtheorem{remark}[theorem]{Remark}
\newtheorem{chunk}[theorem]{}
\newcommand{\agr}{\operatorname{gr}}
\newcommand{\bbp}{\mathbb{P}}
\newcommand{\bp}{\operatorname{Bl}}
\newcommand{\dcoh}[1]{\operatorname{D}^b(\operatorname{coh} #1)}
\newcommand{\depth}{\operatorname{depth}}
\newcommand{\fm}{\mathfrak{m}}
\newcommand{\hh}{\operatorname{H}}
\newcommand{\length}{\operatorname{length}}
\newcommand{\mcal}[1]{\mathcal{#1}}
\newcommand{\mcf}{\mcal F}
\newcommand{\mcg}{\mcal G}
\newcommand{\mcl}{\mcal L}
\newcommand{\mco}{\mcal O}
\newcommand{\proj}{\operatorname{Proj}}
\newcommand{\pull}[1]{\mathbf{L}{#1}^*}
\newcommand{\push}[1]{\mathbf{R}{#1}_*}
\newcommand{\rees}{\mathscr{R}}
\newcommand{\sring}[1]{\operatorname{S}(#1)}
\newcommand{\spec}{\operatorname{Spec}}
\newcommand{\sspec}{\operatorname{\mathbf{Spec}}}
\begin{document}

\title[No Ulrich modules]{Non-existence of Ulrich modules over Cohen-Macaulay local rings}
\author[Iyengar]{Srikanth B. Iyengar}
\address{
Department of Mathematics\\
University of Utah\\ 
Salt Lake City, UT 84112\\ 
U.S.A.}

\author[Ma]{Linquan Ma}
\address{Department of Mathematics\\
Purdue University\\
West Lafayette, IN 47907\\
U.S.A.}

\author[Walker]{Mark E.~Walker}
\address{Department of Mathematics\\
University of Nebraska\\
Lincoln, NE 68588\\
U.S.A. }

\author[Zhuang]{Ziquan Zhuang}
\address{Department of Mathematics\\
Johns Hopkins University\\
Baltimore, MD 21218\\
U.S.A.}

\begin{abstract}
      Over a Cohen-Macaulay local ring, 
      the minimal number of generators of a maximal Cohen-Macaulay module is bounded above by its multiplicity.
      In 1984 Ulrich asked whether there always exist modules for which equality holds; such modules are known nowadays as Ulrich modules. We answer this question in the negative by constructing families of two dimensional Cohen-Macaulay local rings that have no Ulrich modules. Some of these examples are Gorenstein normal domains; others are even complete intersection domains, though not normal. 
     \end{abstract}

\date{\today}

\keywords{Cohen-Macaulay ring, section ring, Ulrich module, Ulrich sheaf}

\subjclass[2020]{13C13 (primary); 13H10, 13C14, 14F06  (secondary)}

\maketitle

\section{Introduction}
This work concerns the question of existence of certain objects over a commutative noetherian local ring $R$. A prime example of such an $R$ is the local ring of functions at a point on an algebraic variety.  In this case the structural properties of $R$ reflect the geometric properties of the variety at that point. For simplicity of exposition, we assume the ring $R$ is a domain, which in the geometric context corresponds to varieties that are irreducible at the point represented by the variety. As for groups, one can study $R$ through its representations, which in this context are called modules.  In what follows we focus on a class of special class of modules called maximal Cohen-Macaulay modules, and assume that  $R$ itself is Cohen-Macaulay; this ensures that there are many maximal Cohen-Macaulay modules. When  $R$ is a regular local ring (which is the analogue of a smooth point on a variety), maximal Cohen-Macaulay modules are all free, and in fact this property characterizes regularity. So the failure of maximal Cohen-Macaulay modules to be free is one measure of the singularity of the ring $R$.

Two of the most basic invariants attached to any module $M$ are the minimal number of elements needed to generate it, which is denoted $\nu_R(M)$, and the largest cardinality of a linearly independent set, which is called its rank and denoted $\mathrm{rank}_R(M)$. Clearly $\mathrm{rank}_R(M)\le \nu_R(M)$, but in general there is no a priori bound on the gap between these invariants. However, when $M$ is a maximal Cohen-Macaulay module, Ulrich~\cite{Ulrich:1984} proved that there is an inequality $\nu_R(M)\le e(R)\cdot \mathrm{rank}_R(M)$. Here $e(R)$ is the Hilbert-Samuel multiplicity of $R$, which is one measure of singularity of $R$.  For instance,  regular local rings $R$ are characterized by the property that $e(R)=1$.

Ulrich asked whether there always exist a module $M$ for which $\nu_R(M)= e(R)\cdot \mathrm{rank}_R(M)$; such modules were subsequently named \emph{Ulrich modules}. 
It follows from the discussion above that $R$ itself is an Ulrich module precisely when $R$ is regular. Inspired by Ulrich’s work, in 2003 Eisenbud and Schreyer \cite{Eisenbud/Schreyer:2003} introduced an analogous notion for projective varieties in algebraic geometry, which they called \emph{Ulrich sheaves}.  When $R$ has an Ulrich module, one can often reduce problems over $R$ to the case of regular rings, where they are more tractable. In the almost four decades since Ulrich modules were introduced, it was discovered that many open conjectures in commutative algebra and algebra geometry are simple consequences of the existence of Ulrich modules and sheaves. In the meantime, Ulrich modules and sheaves have been constructed in many special cases, under various algebraic and geometric assumptions.

In this paper we construct Cohen-Macaulay rings of dimension two and higher, some of which are even Gorenstein and normal, and others that are complete intersections, that possess no Ulrich modules.  Our technique for producing such examples has a geometric flavor, as it involves a relationship between Ulrich modules on $R$ and Ulrich sheaves on a projective variety  naturally associated to $R$, namely, the exceptional fiber of the blow-up of $R$ at its maximal ideal. Ulrich sheaves that arise thus are special in that they can be extended to the entire blow-up. 

While this paper settles Ulrich's question in the negative on the algebraic side, the existence of Ulrich sheaves remains an open and interesting question. 

\subsection*{A more detailed introduction}
An \emph{Ulrich module} over an arbitrary  commutative noetherian local ring $R$ is a nonzero maximal Cohen-Macaulay $R$-module $U$ with the property that its minimal number of generators, $\nu_R(U)$,  is equal to its Hilbert-Samuel multiplicity, $e_R(U)$. In \cite{Brennan/Herzog/Ulrich:1987} such modules are referred to as ``maximally generated" maximal Cohen-Macaulay modules because the inequality $\nu_R(M) \leq e_R(M)$ holds for any maximal Cohen-Macaulay $R$-module $M$. 

The mere existence of an Ulrich $R$-module has striking implications. For instance, it implies that Lech's conjecture holds for $R$: For any  flat local map $R \to S$ of local rings, one has $e(R) \leq e(S)$; see \cite[pg.~69]{Hanes:1999}, or \cite[Theorem~2.11]{Ma:2023a}.  Moreover, if $R$ admits an Ulrich module with the additional property that its class in the Grothendieck group coincides with the class of a free module, then for any $R$-module $L$ of finite length and finite projective dimension, one has $\length_R(L)\geq e(R)$; see \cite{Iyengar/Ma/Walker:2022} for more general results.

The existence of Ulrich modules is known in the following cases.
\begin{enumerate}[\quad$\bullet$]
    \item When $\dim R \leq 1$. For $\dim(R) = 0$, the residue field $R/\fm$ is Ulrich and for $\dim R = 1$, the ideal $\fm^{e(R)-1}$ is Ulrich as an $R$-module \cite[2.1]{Brennan/Herzog/Ulrich:1987}.
\item Hypersurfaces and, more generally, local rings $R$ such that  $\agr_{\fm}(R)$ is a complete intersection \cite{Herzog/Ulrich/Backelin:1991}.
\item 
When $\dim R\le 2$ and $R$ is a standard graded ring over a field; the Cohen-Macaulay case is in \cite{Brennan/Herzog/Ulrich:1987} and the general case follows from \cite{Eisenbud/Schreyer:2003}.
\item Complete local rings $R$ with $\dim R\le 2$ and $\proj(\agr_\fm(R))$ geometrically reduced. This result will appear in forthcoming work by the first three authors. 
\end{enumerate}
See Emre Coskun's article~\cite{Coskun:2017} for a nice survey on Ulrich modules and Ulrich sheaves.

Ulrich asked \cite[\S 3]{Ulrich:1984} whether every Cohen-Macaulay local ring admits an Ulrich\footnote{He did not refer to them as ``Ulrich modules".} module. Yhee~\cite{Yhee:2023} constructed  local domains of dimension two that have no Ulrich modules. However, these examples are not Cohen-Macaulay. 

In this work, we answer Ulrich's original question in the negative:

\begin{theorem} 
\label{th:intro} 
There exist Gorenstein normal domains, and complete intersection local domains, of dimension two that have no Ulrich modules.
\end{theorem}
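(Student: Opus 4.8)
The plan is to translate the algebraic problem of non-existence of Ulrich modules into a geometric problem on the exceptional fiber of a blow-up, and then to build examples for which the relevant geometric objects provably fail to exist. Concretely, for a two-dimensional Cohen--Macaulay local ring $R$, consider the blow-up $W = \bp_{\fm}(\spec R)$ with exceptional divisor $E = \proj(\agr_{\fm}(R))$, a projective curve. I would first record the correspondence (established in the preceding sections, and which I take as given) that an Ulrich $R$-module produces an Ulrich sheaf on $E$ which moreover extends to a sheaf on all of $W$. Thus it suffices to exhibit rings $R$ of the two desired types for which $E$ carries no such extendable Ulrich sheaf.

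The source of the examples should be section rings. Let $Y$ be a smooth projective curve and $L$ an ample line bundle, and set $R = \bigoplus_{n \ge 0} \hh^0(Y, L^{\otimes n})$. This $R$ is automatically a normal two-dimensional domain, and it is Gorenstein exactly when $\omega_Y$ is a power of $L$ (for instance, whenever $Y$ is an elliptic curve, so that $\omega_Y \cong \mco_Y$). The decisive point is the choice of $L$: if $L$ is very ample and normally generated then $R$ is standard graded and $E \cong Y$ is smooth, so Ulrich sheaves abound and nothing is gained. I would instead take $L$ that is ample but fails to be normally generated---the smallest interesting case being a degree-two line bundle on an elliptic curve. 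Then $R$ is not generated in degree one, the associated graded ring $\agr_{\fm}(R)$ genuinely differs from $R$, and $E = \proj(\agr_{\fm}(R))$ should be a non-reduced curve whose reduction is rational. For the complete-intersection family I would produce, by an explicit presentation, two-dimensional complete intersection domains (necessarily non-normal) whose tangent cones are non-reduced in the same qualitative way.

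With the examples in hand, the two remaining tasks are computational and structural. First I would compute $\agr_{\fm}(R)$ explicitly---tracking the degree-one generators $\hh^0(Y,L)$ together with the extra generators forced in higher degree---and identify $E$ as a specific non-reduced projective scheme, confirming that it is not geometrically reduced; this places the examples outside the scope of the reduced case recorded in the fourth bullet above. Second, and this is the heart of the matter, I would show that such a non-reduced $E$ admits no Ulrich sheaf that extends across $W$. The strategy is that an Ulrich sheaf on a curve is characterized by having trivial pushforward $\mco^{\oplus m}$ under a finite projection to $\bbp^1$; on a non-reduced curve this forces the sheaf to interact with the nilpotent thickening in a rigid way, and one aims to extract a numerical obstruction---measured along the non-reduced locus---that must vanish for an Ulrich sheaf yet is forced to be nonzero once the sheaf extends to $W$.

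The main obstacle I anticipate is precisely this last step. Since the fourth bullet guarantees that reduced exceptional fibers always carry Ulrich sheaves, the entire non-existence must be wrung from the nilpotent structure of $E$, so the argument cannot rest on generic or numerical considerations alone. Making the obstruction on the non-reduced locus precise---and verifying that extendability to the blow-up is exactly what turns a mere possibility into a contradiction---is where the real work lies, and it is also what should dictate the specific choices of $Y$, $L$, and the complete-intersection presentations above.
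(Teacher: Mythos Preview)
Your overall architecture---pass from an Ulrich $R$-module to an Ulrich sheaf on $E=\proj(\agr_\fm(R))$ that extends to the blowup, then obstruct the latter via the non-reduced structure of $E$---is exactly the paper's strategy. But your proposed Gorenstein example is wrong. If $Y$ is an elliptic curve and $L$ has degree two, then $h^0(Y,L)=2$, and a short computation (pick a basis $s,t$ of $\Gamma(Y,L)$ and a complementary section $u\in\Gamma(Y,L^2)$) shows that the section ring is the weighted hypersurface $k[s,t,u]/(u^2-g_4(s,t))$ with $|s|=|t|=1$ and $|u|=2$. Its associated graded ring is $k[s,t,u]/(u^2)$, a complete intersection, so by the Herzog--Ulrich--Backelin result recorded in the second bullet of the introduction, $S_\fm$ \emph{does} admit Ulrich modules. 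Non-reducedness of $E$ alone is not an obstruction; the paper's criterion (Theorem~\ref{th:noU}) needs the ideal of $E$ in the blowup to be an $n$th power of a locally principal ideal for some $n\ge 2$, and in the section-ring setting this is engineered by demanding $\Gamma(Y,L)=0$ together with surjectivity of $\Gamma(Y,L^2)\otimes\Gamma(Y,L^j)\to\Gamma(Y,L^{j+2})$ for $j\ge 2$ (Corollary~\ref{co:noU-geometric}). An elliptic curve cannot satisfy the first condition for any ample $L$, since ample implies effective in genus one.

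The paper's Gorenstein examples instead take $C$ smooth non-hyperelliptic of genus $g\ge 3$ and $\mcl$ a \emph{non-effective theta characteristic}: then $\Gamma(C,\mcl)=0$ by choice, $\mcl^2\cong\omega_C$ gives the Gorenstein property, and Noether's theorem on the canonical ring supplies the required surjectivity. The complete intersection examples are likewise explicit five-variable rings with three relations and a carefully chosen grading; ``non-reduced tangent cone'' is necessary but far from sufficient, and you give no candidate presentation. Finally, the obstruction you leave as ``the real work'' is made precise in the paper as an Euler-characteristic monotonicity argument: filtering the Ulrich sheaf $\mcu$ on $E$ by powers of the locally principal ideal $\mcal I$, the top and bottom graded pieces of $\mcu(-1)$ force $\chi(\mcg\otimes\mcl^{n-1})\le 0\le\chi(\mcg)$ for an ample $\mcl$ on a curve, contradicting strict growth of $\chi$. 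Your proposal does not reach this mechanism.
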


This result packages Theorems~\ref{th:ci} and \ref{th:gorenstein}. We do not know if the rings we construct admit lim Ulrich sequences, in the sense of \cite{Ma:2023a}.

Let $B$ be the blowup of $\spec(R)$ at its closed point and $E = \proj(\agr_\fm(R))$ the exceptional fiber. As mentioned, the existence of an Ulrich module on $R$ leads
to an Ulrich sheaf on $E$ that extends to $B$.  Our technique is to find examples where there are no extendable Ulrich sheaves on $E$. 
All of our examples can be obtained by starting with an ample line bundle $\mcl$ on a projective curve $C$ such that $\Gamma(C, \mcl) = 0$ and the canonical pairing
\[
\Gamma(C, \mcl^2) \otimes_k \Gamma(C, \mcl^j) \longrightarrow \Gamma(C, \mcl^{j+2})
\]
is a surjection for all $j\ge 2$. In this situation, 
the local ring  $R$ obtained by localizing (or completing) the section ring $\bigoplus_{j \geq 0} \Gamma(C, \mcl^j)$ at its homogeneous maximal ideal,  cannot have any Ulrich modules; see Corollary~\ref{co:noU-geometric}. 

The simplest situation in which this occurs is when $C$ is a smooth quartic plane curve and $\mcl$ is a non-effective theta characteristic. There are $36$ choices for such an $\mcl$ (assuming $k$ is algebraically closed of characteristic not $2$), and each choice leads to a complete Gorenstein normal domain of dimension two without any Ulrich modules. The complete intersection  example in Theorem \ref{th:intro} arises from the singular projective plane curve $C$ cut out by the polynomial $(y^3+x^2z)^2-x^3z^3$ and a line bundle $\mcl$ satisfying $\mcl^2 \cong \mco_C(1)$.

\section{Ulrich modules and Ulrich sheaves}
\label{se:basic}
Let $(R,\fm,k)$ be a noetherian local ring with maximal ideal $\fm$ and residue field $k$. 
Let $M$ be a nonzero finitely generated $R$-module.  The Krull dimension and depth of $M$ are denoted $\dim_R M$ and $\depth_R(M)$, respectively. The $R$-module $M$ is \emph{maximal Cohen-Macaulay} if it is finitely generated and $\depth_R(M)= \dim R$. We write $\nu_R(M)$ for the minimal number of generators of $M$, and $e_R(M)$ for its Hilbert-Samuel multiplicity with respect to the ideal $\fm$. 

\subsection*{Ulrich modules}
Any maximal Cohen-Macaulay $R$-module $M$ satisfies $\nu_R(M) \le e_R(M)$; see \cite[Section~3]{Ulrich:1984}. When $e_R(M)= \nu_R(M)$ and $M$ is nonzero it is said to be an \emph{Ulrich module}.

Assume $k$ is infinite and let $J\subseteq \fm$ be a minimal reduction of $\fm$. For every maximal Cohen-Macaulay $R$-module $M$, we have 
\[
e(\fm, M) = e(J, M) = \ell_R(M/JM) \geq \ell_R(M/\fm M)=\nu_R(M)\,.
\] 
Thus a nonzero maximal Cohen-Macaulay module $M$ is Ulrich if and only if $J M= \fm M$ for one (or equivalently, every) minimal reduction $J$ of $\fm$.

\subsection*{Ulrich sheaves}
Let $X$ be a projective scheme over a field and let $\mco_X(1)$ be a very ample line bundle on $X$. A coherent sheaf $\mcf$ is called an \emph{Ulrich sheaf for $(X,\mco_X(1))$} if it is nonzero and  one of the following equivalent conditions holds:
\begin{enumerate}
    \item $\hh^i(X, \mcf(j))=0$ for all $-\dim X\le j \le -1$, and all $i$.  
    \item $\hh^i(X, \mcf(j))=0$ except when $i=0$, $j\geq 0$ or $i=\dim X$, $j\leq -\dim X-1$.
\end{enumerate}
Suppose $\pi\colon X\to \bbp^n_k$ is a finite map such that $\pi^*\mco_{\bbp^n_k}(1)=\mco_X(1)$; such a map exists when $k$ is infinite. Then a coherent sheaf $\mcf$ on $X$ is Ulrich if and only if $\pi_*\mcf \cong \mco_{\bbp^n_k}^r$ for some integer $r\ge 1$;
see Eisenbud and Schreyer~\cite{Eisenbud/Schreyer:2003}.

The definition of an Ulrich sheaf depends on the very ample line bundle $\mco_X(1)$. We  suppress $\mco_X(1)$ when it is clear from the context. For instance, when $R$ is a standard graded noetherian ring over a field and $X=\proj(R)$, we take $\mco_X(1)$ to be the coherent sheaf associated to $R(1)$.

\begin{remark}
\label{re:ulrich-dim-1}
If $\dim X=1$, then $\mcf$ is an Ulrich sheaf if and only if $\hh^i(X,\mcf(-1))=0$ for all $i$ and  $\mcf\ne 0$.  Thus the existence of an Ulrich sheaf for $(X, \mco_X(1))$ is equivalent to the existence of a nonzero coherent sheaf on $X$ with no cohomology, and the latter property does not depend on the choice of $\mco_X(1)$.
\end{remark}

\subsection*{Rees algebras and blowups}
Let $R$ be a local ring and $M$ a finitely generated $R$-module. Consider the graded $R$-module
\[
\rees_\fm(M) \coloneqq \bigoplus_{n\geqslant 0}(\fm^n M )t^n\,.
\]
Then $\rees_\fm(R)$ is a finitely generated graded $R$-algebra, called the \emph{Rees ring} of $R$ with respect  $\fm$, and $\rees_\fm(M)$ is a finitely generated graded $\rees_\fm(R)$-module.  The canonical map $R\to\rees_\fm(R)$ induces the morphism 
\[
p\colon \bp_\fm(R)\coloneqq \proj (\rees_\fm(R)) \longrightarrow \spec R\,.
\]
This is the \emph{blowup} of $\spec R$ at $\{\fm\}$. The \emph{strict transform} of an $R$-module $M$ is the coherent sheaf on $\bp_\fm(R)$ associated to the $\rees_\fm(R)$-module $\rees_\fm(M)$. 

The \emph{associated graded} module of $M$ is
\[
\agr_{\fm}(M) = k\otimes_R \rees_\fm(M) = \bigoplus_{n\geqslant 0}(\fm^n M/\fm^{n+1}M)t^n\,.
\]
Thus $\agr_{\fm}(R)$ is a finitely generated standard graded $k$-algebra and the $\agr_\fm(R)$-module $\agr_\fm(M)$ is finitely generated by ${\agr_{\fm}(M)}_0$.

Set $E= \proj (\agr_\fm(R))$; this is the exceptional fiber of the blowup $p$.
\[
\begin{tikzcd}
E \arrow[d,"q" swap] \arrow[r, hookrightarrow,"j"] & \bp_\fm(R) \arrow[d,"p"] \\
\spec k \arrow[r, hookrightarrow] &\spec R
\end{tikzcd}
\]
The strict transform of $M$ can also be described as the quotient $p^*M/\Gamma_E (p^*M)$. 
 
\begin{lemma} 
\label{le:modules-sheaves}
Let $U$ be an Ulrich $R$-module and $\mcf$ be the strict transform of $U$. Then $\pull j\mcf \cong j^*\mcf$ in $\dcoh{E}$ and $j^*\mcf$ is an Ulrich sheaf on $E$.
\end{lemma}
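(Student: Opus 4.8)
The plan is to prove the two assertions in turn, writing $d=\dim R$ and $B=\bp_\fm(R)$. Two structural facts drive everything. First, the strict transform $\mcf=\widetilde{\rees_\fm(U)}$ is by definition $p^*U/\Gamma_E(p^*U)$, so it has no nonzero local sections supported on $E$, i.e. $\Gamma_E(\mcf)=0$. Second, $E$ is an effective Cartier divisor in $B$: its ideal sheaf $\mathcal I_E=\fm\mco_B$ is invertible (isomorphic to $\mco_B(1)$), so locally $E$ is cut out by a single nonzerodivisor.

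For the identification $\pull j\mcf\cong j^*\mcf$ I would use the two-term locally free resolution of the structure sheaf of the divisor,
\[
0\longrightarrow \mathcal I_E\longrightarrow \mco_B\longrightarrow j_*\mco_E\longrightarrow 0,
\]
with $\mathcal I_E$ invertible. Tensoring with $\mcf$ shows $\pull j\mcf$ has cohomology only in degrees $0$ and $-1$, with $\hh^{-1}(\pull j\mcf)=\operatorname{Tor}^{\mco_B}_1(j_*\mco_E,\mcf)$ equal to the kernel of $\mathcal I_E\otimes_{\mco_B}\mcf\to\mcf$, which locally is multiplication by a local equation $s$ of $E$. Any local section of $\mcf$ annihilated by $s$ is supported on $E$, hence lies in $\Gamma_E(\mcf)=0$; thus this kernel vanishes and $\pull j\mcf\cong j^*\mcf$ in $\dcoh E$.

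For the Ulrich property I would first identify $j^*\mcf$. The closed immersion $j$ is induced by the graded surjection $\rees_\fm(R)\twoheadrightarrow\agr_\fm(R)$, so $j^*\mcf$ is the coherent sheaf on $E=\proj(\agr_\fm(R))$ associated to $\agr_\fm(R)\otimes_{\rees_\fm(R)}\rees_\fm(U)=\agr_\fm(U)$. Assuming $k$ infinite, pick a minimal reduction $J=(x_1,\dots,x_d)$ of $\fm$. Since $U$ is Ulrich, $\fm U=JU$ and hence $\fm^nU=J^nU$ for all $n$, so $\agr_\fm(U)=\agr_J(U)$; and since $U$ is maximal Cohen-Macaulay, $x_1,\dots,x_d$ is a $U$-regular sequence, giving $\agr_J(U)\cong (U/\fm U)\otimes_k k[T_1,\dots,T_d]$, free and generated in degree $0$ over the subalgebra $A=k[x_1^*,\dots,x_d^*]\subseteq\agr_\fm(R)$ spanned by the initial forms $x_i^*\in\fm/\fm^2$. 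Because $J$ is a reduction, $\agr_\fm(R)$ is module-finite over $A$; a dimension count then forces $A$ to be a polynomial ring and $A\hookrightarrow\agr_\fm(R)$ to induce a finite morphism $\pi\colon E\to\bbp^{d-1}_k$ with $\pi^*\mco_{\bbp^{d-1}_k}(1)=\mco_E(1)$. As $\agr_\fm(U)\cong A^{\nu_R(U)}$, we get $\pi_*(j^*\mcf)\cong\mco_{\bbp^{d-1}_k}^{\,\nu_R(U)}$, so $j^*\mcf$ is Ulrich by the Eisenbud--Schreyer criterion; equivalently, since $\pi$ is finite and $\mco_E(\ell)=\pi^*\mco(\ell)$, one has $\hh^i(E,(j^*\mcf)(\ell))\cong\hh^i(\bbp^{d-1}_k,\mco(\ell))^{\nu_R(U)}$, which vanishes for $-(d-1)\le\ell\le -1$, yielding condition (1).

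The main obstacle is the heart of the second part: showing $\agr_\fm(U)$ is free over a polynomial subring with all generators in degree $0$. This is precisely where the Ulrich hypothesis enters, through the equality $\fm^nU=J^nU$ (which makes the $\fm$-adic and $J$-adic associated gradeds coincide as $A$-modules) and through $U$-regularity of the reduction. Some care is also needed to confirm that $A$ really is a polynomial ring, that the extension $A\subseteq\agr_\fm(R)$ is finite, and that $\pi^*\mco(1)=\mco_E(1)$, so that the Eisenbud--Schreyer criterion (or the direct cohomology computation) applies. By comparison, the Cartier-divisor input for the first assertion is routine.
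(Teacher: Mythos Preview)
Your proof is correct and follows essentially the same route as the paper's. For the Ulrich property of $j^*\mcf$, both arguments pick a reduction $\boldsymbol{x}$ that is $U$-regular with $\boldsymbol{x}U=\fm U$, deduce $\agr_{(\boldsymbol{x})}(U)\cong (U/\fm U)[T_1,\dots,T_d]$ and identify this with $\agr_\fm(U)$; you then spell out the Noether normalization $A\hookrightarrow\agr_\fm(R)$ and the pushforward criterion explicitly, whereas the paper packages the same content as ``$\agr_\fm(U)$ is maximal Cohen--Macaulay with $e=\nu$'' and cites \cite[Proposition~2.1]{Eisenbud/Schreyer:2003}. For $\pull j\mcf\cong j^*\mcf$, both arguments amount to the injectivity of $\mathcal I_E\otimes\mcf\to\mcf$: you obtain it from $\Gamma_E(\mcf)=0$ via the description $\mcf=p^*U/\Gamma_E(p^*U)$, while the paper obtains it by identifying the map $\mcf(1)\to\mcf$ with the graded inclusion $\rees_\fm(\fm U)\subseteq\rees_\fm(U)$.
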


\begin{proof}
Since $U$ is an Ulrich $R$-module, the graded $\agr_\fm(R)$-module $\agr_\fm(U)$ is Ulrich, in that it is maximal Cohen-Macaulay, generated in degree $0$, and its multiplicity is equal to the number of generators. This is proved in \cite[Corollary~1.6]{Brennan/Herzog/Ulrich:1987} under additional assumptions on $R,U$.  Here is an argument in the general case.

The multiplicity and minimal number of generators of $\agr_\fm(U)$ as a module over $\agr_\fm(R)$ coincide with the corresponding invariants of the $R$-module $U$. Thus the crucial point is that $\agr_\fm(U)$ is maximal Cohen-Macaulay.  We can assume the residue field of $R$ is infinite, and hence that there exists a $U$-regular sequence, say $\boldsymbol{x}\coloneqq x_1,\dots,x_d$, such that $\boldsymbol{x} U=\fm U$. Since $\boldsymbol{x}$ is $U$-regular, one has an isomorphism of $\agr_{(\boldsymbol{x})}(R)$-modules
\[
(U/\boldsymbol{x}U)[X_1,\dots,X_d]\xrightarrow{\ \cong \ } \agr_{(\boldsymbol{x})}(U) \quad \text{with $X_i\mapsto \overline{x}_i$.}
\]
Here $\overline{x}_i$ denotes the residue class of $x_i$ in $(\boldsymbol{x})/(\boldsymbol{x})^2$. Therefore $\agr_{(\boldsymbol{x})}(U)$ is a maximal Cohen-Macaulay module over $\agr_{(\boldsymbol{x})}(R)$. It remains to observe that $\agr_{(\boldsymbol{x})}(U)$ is isomorphic to $\agr_\fm(U)$, since $\boldsymbol{x} U=\fm U$.

It follows that $j^*\mcf$, which is the  coherent sheaf associated to $\agr_\fm(U)$, is an Ulrich sheaf; see \cite[Proposition~2.1]{Eisenbud/Schreyer:2003}. The coherent sheaf $\mcf (1)$ is the sheaf associated to  $\rees_{\fm}(\fm U)$ and the map $\mcf(1) \to \mcf$ induced by the natural map $\mco_B(1) \to \mco _B$, 
where $B=\bp_\fm(R)$, arises from the canonical inclusion 
\[
 \rees_\fm(\fm U) = \bigoplus_{i \geqslant 0} \fm^{i+1}U \subseteq \bigoplus_i \fm^i U = \rees_\fm(U)\,.
  \]
In particular, the map $\mcf(1) \to \mcf $ is injective. This proves that $\pull j\mcf \cong j^*\mcf$.
\end{proof}

In the context of the Lemma~\ref{le:modules-sheaves}, one has also  $\push\pi\mcf\cong U$. Moreover, the result has a converse, and this can be used to construct Ulrich modules over certain two dimensional rings. These results will appear in forthcoming work by the first three authors (we will not use them in the sequel though).

\section{Criteria for non-existence}
In this section we identify obstructions to the existence of an Ulrich module over a local ring in terms of the geometry of the blowup at its maximal ideal. 

\begin{theorem}
\label{th:noU}
Let $R$ be a noetherian ring and $\fm\subset R$ a maximal ideal with $\dim R_\fm\ge 2$. Let $\bp_\fm(R)\to \spec R$ be the blowup at $\{\fm\}$ and $E$ the exceptional fiber of the blowup. If there exists a locally principal ideal sheaf  ${\mcal I}$ on $\bp_\fm(R)$  and an integer $n\ge 2$ such that ${\mcal I}^n$  defines $E$ in $\bp_\fm(R)$, then neither the local ring $R_\fm$ nor its $\fm$-adic completion has any Ulrich modules. 
\end{theorem}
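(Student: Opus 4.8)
The plan is to assume, for contradiction, that $R_\fm$ admits an Ulrich module $U$; after passing to $R_\fm$ and completing if necessary, I may assume $R$ is local, since the blow-up $B\coloneqq\bp_\fm(R)$, the exceptional fibre $E$, and the root $\mcal I$ all base-change along completion. By Lemma~\ref{le:modules-sheaves} the strict transform $\mcf$ of $U$ satisfies $\pull j\mcf\cong j^*\mcf$ and restricts to an Ulrich sheaf $\mcg\coloneqq j^*\mcf$ on $E$ for $\mco_E(1)$. First I would form $\mcal{L}\coloneqq j^*\mcal I$, a line bundle on $E$ with $\mcal{L}^{\,n}\cong\mco_E(1)$, and write $E_1$ for the effective Cartier divisor $V(\mcal I)$ on $B$, so that $nE_1=E$. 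Since $\mcal I$ is locally principal, a local equation $f$ of $E_1$ satisfies: $f^{\,n}$ is a nonzerodivisor on $\mcf$ (this is what $\pull j\mcf\cong j^*\mcf$ says), hence so is $f$. I would record the resulting short exact sequences $0\to\mcf\otimes\mcal I^{b+n}\to\mcf\otimes\mcal I^{b}\to j_*(\mcg\otimes\mcal{L}^{b})\to0$, together with their one-step refinements over $E_1$, as the bridge between cohomology on $E$ and on $B$.

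Next I would extract the numerics. The Ulrich property fixes the Hilbert polynomial $\chi(E,\mcg(c))=\nu_R(U)\binom{c+d-1}{d-1}$, where $d=\dim R_\fm$. Consequently the Hilbert polynomial of $\mcg$ with respect to $\mcal{L}$, namely $b\mapsto\chi(E,\mcg\otimes\mcal{L}^{b})$, is the unique polynomial of degree $d-1$ that agrees with $c\mapsto\nu_R(U)\binom{c+d-1}{d-1}$ along $b\in n\mathbb{Z}$; its leading coefficient is therefore $\nu_R(U)\big/\big((d-1)!\,n^{\,d-1}\big)$. Thus the multiplicity of $\mcg$ with respect to $\mcal{L}$ equals $\nu_R(U)/n^{\,d-1}$, which is \emph{strictly} smaller than $\nu_R(U)$ precisely because $n\ge2$ and $d\ge2$.

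The heart of the argument is to show that extendability to $B$ promotes $\mcg$ to an essentially Ulrich-type module for the finer polarization $\mcal{L}$. Concretely, I would produce a finitely generated graded module $N$ over the section ring $\bigoplus_{b\ge0}\hh^0(E,\mcal{L}^{b})$ whose associated sheaf is $\mcg$, which is \emph{maximal Cohen-Macaulay} and \emph{minimally generated in degree $0$} by $\nu_R(U)$ elements. The degree-$0$ generation should be read off not from $E$ but from the Rees-theoretic structure on $B$ provided by the extension: $N$ is a quotient of a Rees-type module generated in degree $0$ by construction, mirroring how $\agr_\fm(U)$ is generated in degree $0$ over $\agr_\fm(R)$ in the proof of Lemma~\ref{le:modules-sheaves}. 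Maximal Cohen-Macaulayness should follow, as there, by reducing modulo the lift to $B$ of a regular sequence $x_1,\dots,x_d$ with $\fm U=(x_1,\dots,x_d)U$, which trivializes the $\mcal I$-torsion and exhibits the reduction as free. Granting this, the fundamental inequality $\nu\le e$ for maximal Cohen-Macaulay modules — the very inequality defining Ulrich modules — applied to $N$ reads $\nu_R(U)\le\nu_R(U)/n^{\,d-1}$, which is absurd for $n\ge2$; this is the contradiction.

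The step I expect to be the main obstacle is the construction of $N$ with both properties simultaneously. The naive candidate, the saturated section module $\bigoplus_b\hh^0(E,\mcg\otimes\mcal{L}^{b})$, is \emph{not} generated in degree $0$: the Euler characteristic $\chi(E,\mcg\otimes\mcal{L}^{b})$ is strictly positive for $-n<b<0$ (each factor $ni-b$ is positive there), so there are forced nonzero sections in negative degrees. Degree-$0$ generation must therefore be extracted from the fractional $\mcal I$-adic filtration of $\mcf$ on the blow-up rather than from cohomology on $E$, and the delicate point is to reconcile this filtration with the Cohen-Macaulay property of the reduction modulo $x_1,\dots,x_d$. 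This is exactly where the hypothesis $n\ge2$, and the existence of the root $\mcal I$, must enter in an essential way — absent a root there is no finer polarization $\mcal{L}$ to run the argument, and a non-extendable Ulrich sheaf would not yield the module $N$ on the blow-up.
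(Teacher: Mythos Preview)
Your proposal has a genuine gap at exactly the point you flag as the main obstacle, and I do not see how to close it along the lines you suggest. Two concrete problems:

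\textbf{(i) The ``multiplicity'' you compute is not the one in the inequality $\nu\le e$.} The section ring $T=\bigoplus_{b\ge 0}\hh^0(E,\mcl^b)$ is \emph{not} standard graded (its degree-one piece may well be zero), so the leading coefficient of $b\mapsto\chi(E,\mcg\otimes\mcl^b)$ has no direct relation to the Hilbert--Samuel multiplicity of any module over $T_\fm$. The inequality $\nu_R(N)\le e_R(N)$ for maximal Cohen--Macaulay modules is a statement about the $\fm$-adic filtration; replacing it by an $\mcl$-graded Hilbert polynomial requires $T$ to be standard graded, which you have not arranged.

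\textbf{(ii) Your mechanism for proving $N$ is maximal Cohen--Macaulay does not match the grading.} The regular sequence $x_1,\dots,x_d$ consists of sections of $\mco_B(1)=\mcal I^n$, not of $\mcal I$; reducing modulo them trivializes the $\mcal I^n$-adic (equivalently $\fm$-adic) filtration, which is exactly the Lemma~\ref{le:modules-sheaves} argument and says nothing about the finer $\mcal I$-adic filtration you need. There is no evident lift of a system of parameters to degree one of $T$.

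The paper's proof avoids this entirely by a different and much shorter route. It first reduces to $\dim R=2$ by going modulo part of a minimal reduction (checking the hypotheses descend). In dimension two it works not on $E$ but on the \emph{curve} $W=V(\mcal I)$: the Ulrich sheaf $\mcal U=j^*\mcf$ on $E$ admits the filtration $\mcal U_i=\mcal I^i\mcf/\mcal I^n\mcf$ with successive quotients $\mcal U|_W\otimes\mcl^i$, where $\mcl=\mcal I/\mcal I^2$ is ample on $W$. Setting $\mcg=(\mcal U|_W)(-1)$, the filtration yields an injection $\mcg\otimes\mcl^{n-1}\hookrightarrow\mcal U(-1)$ and a surjection $\mcal U(-1)\twoheadrightarrow\mcg$. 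Since $\mcal U(-1)$ has no cohomology and $\dim W=1$, this forces $\chi(\mcg\otimes\mcl^{n-1})\le 0\le\chi(\mcg)$, contradicting the fact that $j\mapsto\chi(\mcg\otimes\mcl^j)$ is linear with positive slope. No auxiliary module $N$, no section ring of $\mcl$, and no appeal to $\nu\le e$ are needed.
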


\begin{proof}
Let $\widehat{R}_\fm$ denote the $\fm$-adic completion of $R_\fm$. Since the maps $R\to R_\fm$ and $R\to \widehat{R}_\fm$ are flat, it is easy to verify that the hypotheses carry over to both $R_\fm$ and $\widehat{R}_\fm$. Thus we can assume in the rest of the argument that $R$ is local, with maximal ideal $\fm$, and that $\dim R\ge 2$. 

In what follows we write $B$ instead of $\bp_\fm(R)$. Observe that ${\mcal I}^n\cong \mco_{B}(1)$. Let $W$ be the subscheme of $B$ cut out by $\mcal I$. 

We first consider the case when $\dim R=2$; equivalently, $\dim W=1$.

Assume to the contrary that there exists an Ulrich $R$-module $U$.  Let $\mcf$ be the strict transform of $U$ and $\mcal U$ the sheaf on $E$ associated to $\agr_\fm(U)$.  We may identify $\mcal U$ with $\mcf/ {\mcal I}^n \mcf$. Setting ${\mcal U}_i$ to be the coherent sheaf on $E$ given by ${\mcal I}^i \mcf/{\mcal I}^n \mcf$, we obtain a filtration 
\[
0 = \mcal U_n \subseteq \cdots \subseteq \mcal U_0 = \mcal U
\]
of coherent sheaves on $E$. Identifying the sub-quotients with their restrictions to $W$, for each $i$ one gets
\[
{\mcal U}_{i}/{\mcal U}_{i+1} \cong  \mcal U|_{W} \otimes_{\mco_W} \mcl^i
\]
where $\mcl =\mcal I/{\mcal I}^2$ is the line bundle that is the restriction of $\mcal I$ to $W$. Observe that $\mcl$ is ample because $\mcl^n\cong {\mcal I}^n/{\mcal I}^{n+1}$ is the restriction of $\mco_B(1)$ to $W$. 

Set $\mcg \coloneqq ({\mcal U}_0/{\mcal U}_{1}) (-1)$, again viewed as a sheaf on $W$. Since $\dim W=1$ one has that $\dim \mcg=1$ as well and hence the function
 \[
 \chi(\mcg \otimes \mcl^j)= \mathrm{rank}_k \hh^0(W, \mcg \otimes \mcl^{j})
    -\mathrm{rank}_k \hh^1(W, \mcg \otimes \mcl^{j})
 \] 
is strictly increasing  in $j$. Indeed $\chi(\mcg \otimes \mcl^j)$ is a polynomial function in $j$ of degree at most one, by ~\cite[Lemma~33.4.5.1]{stacks-project}, and since $\mcl$ is ample and $\dim\mcg=1$, this polynomial must be linear with positive slope. See \cite[Propositions~23.78, 23.79]{Gortz/Wedhorn:2010} for more precise results.

The filtration of $\mcal U$ above gives an injection and a surjection:
\[
\mcg \otimes \mcl^{n-1} \hookrightarrow {\mcal U}(-1) \quad \text{and}\quad 
    {\mcal U}(-1) \twoheadrightarrow \mcg\,.
\]
Since $\mcal U$ is an Ulrich sheaf, ${\mcal U}(-1)$ has no cohomology. Combined with the fact that $\dim W = 1$ one gets
\[
\hh^0(W, \mcg \otimes \mcl^{n-1}) = 0 = \hh^1(W, \mcg)
\]
and therefore that
\[
\chi(\mcg \otimes \mcl^{n-1}) \leq 0 \leq \chi(\mcg)\,.
\]
Since $n\ge 2$, this contradicts the fact that $\chi(\mcg \otimes \mcl^j)$ is strictly increasing.

This completes the proof when $\dim R=2$.

It remains to explain the reduction to this case. So suppose $\dim R\ge 3$ and let $U$ be an Ulrich $R$-module. By enlarging the residue field if needed we can assume it is infinite. One can then find a system of parameters $\boldsymbol{x} \coloneqq x_1,\dots, x_d$ of $R$ that is a regular sequence on $U$ and such that $\boldsymbol{x}U = \fm U$. Let 
\[
R' = R/(x_1,\dots, x_{d-2}) \quad\text{and}\quad  U' = U/(x_1,\dots, x_{d-2})U.
\]
It is clear that the $R'$-module $U'$ is Ulrich.  So it suffices to show the hypotheses still hold for the ring $R'$. 

Let $B$ and $B'$ denote the blowups of $\spec R$ and $\spec R'$ at their respective closed points, and let $E$ and $E'$ be the corresponding exceptional fibers.  It is straightforward to verify that the following diagram of rings is co-cartesian:
\[
\begin{tikzcd}
\rees_\fm(R) \arrow[d,twoheadrightarrow]\arrow[r,twoheadrightarrow] 
    & \rees_{\fm R'}(R') \arrow[d,twoheadrightarrow] \\
\agr_\fm(R) \arrow[r,twoheadrightarrow] 
    & \agr_{\fm R'}(R')
\end{tikzcd}
\]
 It thus follows that the square below is cartesian:
\[
\begin{tikzcd}
E' \arrow[d,hookrightarrow]\arrow[r,hookrightarrow] 
    & E \arrow[d,hookrightarrow] \\
B'\arrow[r,hookrightarrow]
    & B
\end{tikzcd}
\]
Our hypothesis is that the ideal $\mcal J$ cutting $E$ out of $B$ satisfies $\mcal J = \mcal I^n$ for some locally principal sheaf of ideals $\mcal I$ on $B$ and integer $n \geq 2$. Let $\mcal I'$  and $\mcal J'$ denote the pull-back of $\mcal I$ and $\mcal J$, respectively,  to $B'$. Since the square above is cartesian, $\mcal J'$ coincides with the ideal cutting out $E'$ from $B'$ and hence it is locally generated by a non-zero-divisor. Since $\mcal J = \mcal I^n$, we have $\mcal J' = (\mcal I')^n$, and since $\mcal I$ is locally principal, so is $\mcal I'$. Thus, all the assumptions are valid for $R'$, as desired. 
\end{proof}

Here is a special case of Theorem~\ref{th:noU}.

\begin{corollary}
\label{co:noU}    
Let $(R,\fm,k)$ be a noetherian local ring such that $\dim R\ge 2$. Let $ \bp_\fm(R) \to \spec R$ be the blowup of $\{\fm\}$ and $E$ the exceptional fiber of the blowup. If $\bp_\fm(R)$ is locally factorial (for instance, regular) and $E$  is irreducible but not reduced, then  $R$ has no Ulrich modules. 
 \end{corollary}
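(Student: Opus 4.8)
The plan is to deduce this from Theorem~\ref{th:noU} by exhibiting a locally principal ideal sheaf $\mcal{I}$ on $B \coloneqq \bp_\fm(R)$ together with an integer $n \geq 2$ such that $\mcal{I}^n$ cuts out $E$. The starting observation is that $E$ is an effective Cartier divisor on $B$: its ideal sheaf is $\fm\mco_B = \mco_B(1)$, which is invertible. Write $\mcal{J}$ for this ideal sheaf. Let $W \coloneqq E_{\mathrm{red}}$; since $E$ is irreducible, $W$ is integral, and it has codimension one in $B$ because $E$ is a divisor. I would take $\mcal{I}$ to be the ideal sheaf of $W$ and $n$ to be the multiplicity of $E$ along $W$, so that the whole task reduces to proving the two assertions $\mcal{J} = \mcal{I}^n$ and $n \geq 2$.

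First I would check that $\mcal{I}$ is locally principal, and this is exactly where local factoriality is used: for each point $x \in W$ the stalk $\mco_{B,x}$ is a unique factorization domain, and the height-one prime cutting out $W$ in that local ring is therefore principal. Hence $\mcal{I}$ is invertible and $W$ is itself an effective Cartier divisor.

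Next comes the comparison $\mcal{J} = \mcal{I}^n$. Working in a factorial stalk $\mco_{B,x}$ with $x \in W$, write $\mcal{I}_x = (f)$ with $f$ prime and $\mcal{J}_x = (g)$. Since $E$ and $W$ share the same irreducible support, $V(g) = V(f)$, so every prime factor of $g$ is an associate of $f$; thus $g = u f^{n_x}$ for a unit $u$ and an integer $n_x \geq 1$, giving $\mcal{J}_x = \mcal{I}_x^{n_x}$. The exponent equals $\length_{\mco_{B,\eta}} \mco_{E,\eta}$, where $\eta$ is the generic point of $W$, and is in particular independent of $x$; calling this common value $n$, one gets $\mcal{J} = \mcal{I}^n$ on all of $B$, the identity being trivial away from $W$ since $E$ is empty there. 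Finally $n \geq 2$: were $n = 1$, then $\mcal{J} = \mcal{I}$ would be the ideal sheaf of the reduced subscheme $W$, forcing $E = W$ to be reduced, contrary to the hypothesis.

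With $\mcal{I}$ locally principal, $n \geq 2$, and $\mcal{I}^n = \mcal{J}$ cutting out $E$, Theorem~\ref{th:noU} applies directly and shows that $R$ has no Ulrich modules. I expect the step requiring the most care to be the equality $\mcal{J} = \mcal{I}^n$: it is precisely local factoriality that upgrades the coincidence of $E$ and $nW$ as codimension-one cycles into an equality of ideal sheaves, since the invertibility of $\mcal{I}$ allows the divisor-theoretic identity to be read off stalk by stalk.
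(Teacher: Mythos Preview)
Your proof is correct and follows essentially the same route as the paper: set $W=E_{\mathrm{red}}$, use local factoriality to see that the integral Weil divisor $W$ is Cartier, write $[E]=n[W]$ with $n\ge 2$ because $E$ is not reduced, and invoke Theorem~\ref{th:noU}. The paper states this in the language of Weil and Cartier divisors in two sentences, whereas you unpack the identity $\mcal J=\mcal I^{\,n}$ stalk by stalk; the content is the same.
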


\begin{proof}
Set $W\coloneqq E_{\mathrm{red}}$. The hypothesis implies that it is an integral scheme and hence an integral Weil divisor on $\bp_\fm(R)$. Since $\bp_\fm(R)$ is locally factorial, we know that $W$ is a Cartier divisor and $[E]=n[W]$ for some $n\ge 2$, since $E$ is not reduced. Thus Theorem~\ref{th:noU} applies.
\end{proof}

Our interest in the formulation above is that it applies also when $R$ is not a  section ring; see Remark~\ref{re:Yhee} where we revisit the example constructed by Yhee~\cite{Yhee:2023}.

Next we identify another source of rings satisfying the hypotheses of Theorem~\ref{th:noU}.

\subsection*{Section rings}
The section ring of a line bundle $\mcl$ over a scheme $V$ is:
\[
\sring{V,\mcl} \coloneqq \bigoplus_{j\geqslant 0}\Gamma(V, \mcl^j)\,.
\]
When $k$ is a field, $V$ is a proper $k$-scheme, and $\mcl$ is ample,  $\sring{V,\mcl}$ is a finitely generated graded $k$-algebra.

\begin{corollary}
\label{co:noU-geometric}
Let $V$ be a projective $k$-scheme with $\dim V\ge 1$, let $\mcl$ be an ample line bundle on $V$, and set $S = \sring{V,\mcl}$. Suppose there is an integer $a \geq 2$ so that the following conditions hold:
\begin{enumerate}[\quad \rm (1)]
    \item $S_0 = k$ and $S_j = 0$ for $1 \leq j \leq a-1$;
    \item the multiplication map $S_a \otimes_k S_j \to S_{a+j}$ is surjective for all $j \geq a$.  
\end{enumerate}  
Then for $\fm=S_{\geqslant 1}$, neither the local ring $S_\fm$ nor its $\fm$-adic completion admits an Ulrich module. 
\end{corollary}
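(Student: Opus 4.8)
The plan is to deduce the statement from Theorem~\ref{th:noU}, applied to the ring $R = S$, its homogeneous maximal ideal $\fm = S_{\geqslant 1}$, and the integer $n = a \geqslant 2$. Since $S$ is a finitely generated graded $k$-algebra with $S_0 = k$, the ideal $\fm$ is maximal and $\dim S_\fm = \dim S = \dim V + 1 \geqslant 2$, so the dimension hypothesis is met. Writing $B = \bp_\fm(S)$ and letting $E \subseteq B$ be the exceptional fiber, whose ideal sheaf is $\fm\mco_B = \mco_B(1)$, it remains to produce a locally principal ideal sheaf $\mcal I$ on $B$ with $\mcal I^a = \mco_B(1)$; Theorem~\ref{th:noU} then gives the conclusion for both $S_\fm$ and its completion.

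The engine of the argument is the identity $\fm^n = \bigoplus_{j \geqslant na} S_j$ for all $n \geqslant 0$. The case $n = 1$ is hypothesis (1). For the inductive step, hypothesis (2) gives $S_j = S_a \cdot S_{j - a}$ whenever $j \geqslant (n+1)a$ (since then $j - a \geqslant na \geqslant a$), so that $\bigoplus_{j \geqslant (n+1)a} S_j \subseteq S_a \cdot \fm^n \subseteq \fm^{n+1}$; the reverse inclusion is clear. I would record three consequences. First, $S_{ma} = S_a^m$, so the Veronese $S^{(a)} = \bigoplus_m S_{ma}$ is generated in degree one; hence $V \cong \proj S = \proj S^{(a)}$ and the sections in $S_a$ have no common zero on $V$. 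Second, because $\mcl$ is an ample line bundle on the projective scheme $V$, the natural maps $\widetilde{S(d)} \to \mcl^d$ are isomorphisms for all $d$; in particular $\mco_V(1) \cong \mcl$ is invertible and $\mco_V(1)^{\otimes a} \cong \mco_V(a) \cong \mcl^a$. Third, $\fm\mco_B = \mco_B(1)$ is already known to be invertible.

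I would then build $\mcal I$ on the affine charts $D_+(ft) \subseteq B$ with $f \in S_j$ homogeneous. The identity above shows that for $f \in S_a$ the chart ring is $(S_f)_{\geqslant 0}$, on which $\fm\mco_B$ restricts to the ideal $(f) = (S_f)_{\geqslant a}$; and since $S_a$ has no common zero on $V$, these charts cover $E$. (On the remaining charts, with $\deg f > a$, the section $f$ becomes a unit, $\fm\mco_B$ is the unit ideal, and the chart lies off $E$; together all these charts cover $B$.) Now I would use that $\mcl$ is a line bundle: over a small open $U \subseteq D_+(f)$ of $V$ on which $\mcl$ is trivialized by a nowhere-vanishing section $\lambda$, one has $(S_f)_d|_U = \lambda^d\,\mco_U$, so $(S_f)_{\geqslant 1}|_U = (\lambda)$ is principal, $f = u\lambda^a$ for a unit $u$, and therefore the principal ideal $\mcal I := (\lambda)$ satisfies $\mcal I^a = (\lambda^a) = (f) = \mco_B(1)$ over $U$. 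The isomorphism $\mco_V(1) \cong \mcl$ ensures that these local generators differ by units on overlaps, so the local ideals glue—together with $\mcal I = \mco_B$ away from $E$—to a single locally principal ideal sheaf $\mcal I$ (concretely $\mcal I = \widetilde M$ for $M = \bigoplus_n S_{\geqslant na+1}\,t^n \subseteq \rees_\fm(S)$) with $\mcal I^a = \mco_B(1)$, as required.

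I expect the third paragraph to be the crux. Its substance is that, although $B$ need not be locally factorial—so Corollary~\ref{co:noU} does not apply and $\mcal I$ must be exhibited by hand—a defining section $f \in S_a = \Gamma(V,\mcl^a)$ of the exceptional divisor is, locally on the line bundle $\mcl$, an honest $a$-th power $\lambda^a$; this is exactly what produces the multiplicity $a$ in the local description $E = a\cdot V(\lambda)$. Hypotheses (1) and (2) enter precisely to force $\fm^n = S_{\geqslant na}$, which simultaneously identifies the chart rings as $(S_f)_{\geqslant 0}$, pins the local equation of $E$ to $(f)$, and guarantees that the degree-$a$ charts already cover $E$. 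The points I am glossing over—verifying the chart rings and checking that the local ideals patch to a genuine sheaf of ideals of $\mco_B$—are routine.
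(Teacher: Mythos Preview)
Your proof is correct and follows the same strategy as the paper: establish $\fm^n=S_{\geqslant na}$ by induction from hypotheses (1)--(2), then use this to exhibit a locally principal ideal sheaf $\mcal I$ on $B=\bp_\fm(S)$ with $\mcal I^a$ cutting out $E$, and invoke Theorem~\ref{th:noU}. The only difference is in packaging: where you build $\mcal I$ by hand on charts $D_+(ft)$ using local trivializations of $\mcl$, the paper observes that $\rees_\fm(S)=\bigoplus_n S_{\geqslant na}t^n$ is the $a$-th Veronese of $\bigoplus_n S_{\geqslant n}t^n$, whence $B\cong\sspec_V(\mathrm{Sym}(\mcl))$ globally; the ideal $\mcal I$ is then simply $\mcl\cdot\mathrm{Sym}(\mcl)$, visibly locally principal with $\mcal I^a$ defining $E$. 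Your chart computation is exactly the local content of this identification (and in fact your charts $D_+(ft)$ for $f\in S_a$ already cover all of $B$, not just $E$, so the patching with $\mcal I=\mco_B$ off $E$ is unnecessary), so the two arguments are the same in substance.
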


\begin{proof}
We claim that as schemes over $\spec S$ one has 
\[
\bp_\fm(S) \cong \sspec_V (\mathrm{Sym}(\mcl)) \quad \text{where $\mathrm{Sym}(\mcl)$ is the sheaf of algebras $\bigoplus_{i\geqslant 0} \mcl^i$.}
\] 
To verify this, we first verify by an induction on $j$ that  $\fm^j = S_{\geqslant ja}$ for all $j \geq 0$. 

The base case $j=1$ is clear. Assuming the desired equality holds for some $j\ge 1$, one gets equalities
\[
\fm^{j+1}=\fm \fm^j = S_{\geqslant a}\cdot S_{\geqslant aj} = S_{\geqslant (j+1)a}
\]
where the second one is by the induction hypothesis, and the last one is immediate from condition (2) in the statement.

The equality $\fm^j=S_{\geqslant ja}$ justifies the first isomorphism below:
\begin{align*}
    \bp_{\fm}(S) 
    \cong \proj\big(\bigoplus_{j\geqslant 0} S_{\geqslant ja} t^j \big) 
    \cong \proj\big(\bigoplus_{j\geqslant 0} S_{\geqslant j} t^j \big)
    \cong \sspec_V(\mathrm{Sym}(\mcl))\,,
\end{align*}
The second one holds as  $\oplus_{j \geqslant 0} S_{\geqslant ja} t^j$ is the $a$-th Veronese subring of $\oplus_{j\geqslant 0} S_{\geqslant j} t^j$, and  the last isomorphism holds because $\mcl$ is ample; see \cite[Section~8.7.3]{EGA2:1961} or \cite[Paragraph~6.2.1]{Hyry/Smith:2003}. These isomorphisms are compatible with projections to $\spec S$. 

This justifies the claim about the blowup.

The exceptional fiber $E$ of the blowup $\bp_\fm(S) \to \spec S$ is the  subscheme of $\bp_\fm(S)$ cut out by $\mcl^a$, that is to say 
\[
E\cong \sspec_V\left(\frac{\mathrm{Sym}(\mcl)}{\mcl^a \, \mathrm{Sym}(\mcl)} \right)\,.
\]
As $V$ is the subscheme of $\bp_\fm(S)$ cut out by $\mcl$ and $a\ge 2$, we can apply  Theorem~\ref{th:noU} to get the desired result.
\end{proof}

\begin{remark}
  Conditions (1) and (2) imply that $S$ is generated as a $k$-algebra by its components in degrees $[a, 2a-1]$. However the latter condition is weaker: the $k$-algebra $k[x,y]$ with $|x|=2$ and $|y|=3$ is generated by its components in degrees $[2,3]$, but the map $S_2\otimes_k S_4\to S_6$ is not surjective because $y^2$ is not in the image.
  
  \end{remark}

Corollary~\ref{co:noU-geometric} brings up the question: When is a noetherian graded ring over a field the section ring of an ample line bundle on a projective scheme? Here is one answer; it is extrapolates from \cite[Proposition~2.1]{Hyry/Smith:2004}, where the result is stated for normal domains. The argument does not require these conditions, and we sketch a proof, to convince ourselves. 

\begin{lemma}
\label{le:section-ring}
Let $k$ be a field and $S$ a noetherian $\mathbb{N}$-graded $k$-algebra with $S_0=k$ and $\depth S\ge 2$. Then $S$ is  the section ring of an ample line bundle on a projective scheme over $k$ if and only if there exist homogeneous elements $x_0,\dots,x_n$ in $S$ such that $\sqrt{(x_0,\dots,x_n)}=S_{\geqslant1}$ and the $\mathbb{Z}$-graded rings $S_{x_i}$ have a unit of degree $1$.
\end{lemma}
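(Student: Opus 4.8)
The plan is to take $V=\proj S$ and $\mcl=\mco_V(1)$, and to recognize the two stated conditions as the geometric assertions that the standard affine opens $D_+(x_0),\dots,D_+(x_n)$ cover $\proj S$ and that $\mco_V(1)$ restricts to a trivial line bundle on each of them. Indeed, $\sqrt{(x_0,\dots,x_n)}=S_{\geqslant 1}$ is exactly the condition that $V_+(x_0,\dots,x_n)=\varnothing$, i.e. that these opens cover $\proj S$ (which already forces the $x_i$ to have positive degree), while a unit of degree $1$ in $S_{x_i}$ is precisely a nowhere-vanishing section of $\mco_V(1)$ over $D_+(x_i)$, via the identification $\Gamma(D_+(x_i),\mco_V(m))=(S_{x_i})_m$. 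Thus the two conditions together are equivalent to saying that $\mco_V(1)$ is invertible on $\proj S$, equipped with an explicit trivializing cover by standard opens.

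For the forward implication I would argue as follows. Assuming the two conditions, the discussion above shows $\mcl=\mco_V(1)$ is invertible and that $\mco_V(m)\cong\mcl^m$ for all $m$, since on each $D_+(x_i)$ a degree-$1$ unit identifies every graded piece of $S_{x_i}$ with $(S_{x_i})_0$. Each $x_i$ then defines a global section of $\mcl^{\deg x_i}$ whose non-vanishing locus is the affine open $D_+(x_i)$, and since these cover $V$ the standard affineness criterion for ampleness shows $\mcl$ is ample. Finally, the hypothesis $\depth S\geqslant 2$ means $\hh^0_\fm(S)=\hh^1_\fm(S)=0$, so the comparison exact sequence relating local and sheaf cohomology,
\[
0\to \hh^0_\fm(S)_m\to S_m\to \Gamma(V,\mco_V(m))\to \hh^1_\fm(S)_m\to 0,
\]
yields $S_m\cong\Gamma(V,\mcl^m)$ for every $m\geqslant 0$. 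Hence $S\cong\sring{V,\mcl}$, exhibiting $S$ as the section ring of the ample line bundle $\mcl$ on the projective $k$-scheme $V$.

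For the converse I would start from an isomorphism $S\cong\sring{V,\mcl}$ with $V$ projective over $k$ and $\mcl$ ample and invertible. Since $\mcl$ is ample there is a canonical isomorphism $V\cong\proj S$ carrying $\mcl$ to $\mco_{\proj S}(1)$; this is the input I would cite from \cite{EGA2:1961}, and it is what underlies \cite[Proposition~2.1]{Hyry/Smith:2004}. Because $\mcl$ is ample and invertible, for each point of $V$ and each neighborhood there is a homogeneous section of some power $\mcl^d$ whose non-vanishing locus is an affine open inside that neighborhood; choosing the neighborhoods so that $\mcl$ is trivial on them and invoking quasi-compactness of $V$, I obtain finitely many homogeneous $x_0,\dots,x_n\in S$ whose loci $D_+(x_i)$ are affine, cover $V$, and over each of which $\mcl$ is trivial. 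The covering property is the equality $\sqrt{(x_0,\dots,x_n)}=S_{\geqslant 1}$, and a trivialization of $\mcl$ over $D_+(x_i)$ is, as noted above, a unit of degree $1$ in $S_{x_i}$.

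The step requiring the most care, and the one I regard as the main obstacle, is the precise bookkeeping matching the algebra with the geometry: that a unit of degree exactly $1$ (not $\deg x_i$) in $S_{x_i}$ corresponds to a trivialization of $\mco_V(1)$ rather than of a power, and that one may always refine a trivializing open cover to one by standard opens $D_+(x_i)$ without assuming $S$ is generated in degree $1$. On the cohomological side the two genuinely nontrivial inputs are the comparison exact sequence above and the identification $V\cong\proj S$, $\mcl\cong\mco_{\proj S}(1)$ for ample $\mcl$; both are standard, but they are exactly where the hypotheses $\depth S\geqslant 2$ and ampleness are consumed.
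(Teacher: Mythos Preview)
Your proposal is correct and, for the implication ``conditions $\Rightarrow$ section ring,'' follows the paper's argument essentially verbatim: both of you use the local--global comparison sequence
\[
0\to \hh^0_\fm(S)\to S\to \bigoplus_{j\geqslant 0}\hh^0(\proj S,\widetilde{S(j)})\to \hh^1_\fm(S)_{j\geqslant 0}\to 0
\]
together with $\depth S\geqslant 2$, and both interpret the degree-$1$ unit in $S_{x_i}$ as the statement $S_{x_i}\cong [S_{x_i}]_0[t,t^{-1}]$, from which $\widetilde{S(i)}\otimes\widetilde{S(j)}\xrightarrow{\sim}\widetilde{S(i+j)}$ and the invertibility of $\mco_V(1)$ follow.

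For the implication ``section ring $\Rightarrow$ conditions'' you take a more abstract route than the paper. You invoke the identification $V\cong\proj S$, $\mcl\cong\mco_{\proj S}(1)$ from \cite{EGA2:1961}, then refine a trivializing cover for $\mcl$ to one by standard opens $D_+(x_i)$; you rightly flag the bookkeeping needed here (that a trivialization of $\mcl$ on $D_+(x_i)$ really produces a degree-$1$ unit in $S_{x_i}$, and that such a refinement exists when $S$ is not generated in degree~$1$). The paper instead gives a direct, hands-on construction that sidesteps both issues: pick $m\gg 0$ so that $\mcl^m$ and $\mcl^{m+1}$ are very ample, choose bases $s_1,\dots,s_r$ of $\Gamma(X,\mcl^m)$ and $t_1,\dots,t_l$ of $\Gamma(X,\mcl^{m+1})$, set $x_{ij}=s_it_j$, and observe that $t_j^2/x_{ij}\in S_{x_{ij}}$ is a unit of degree $2(m+1)-(2m+1)=1$. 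This trick of using two consecutive degrees to manufacture a degree-$1$ unit is the one concrete idea in the paper's proof that your approach does not use; it buys you freedom from the delicate refinement step and from citing the $\proj$-of-section-ring identification.
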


\begin{proof}
Suppose $S=\sring{X,\mcl}$. Choose an integer $m\gg 0$ such that  $\mcl^m$ and $\mcl^{m+1}$ are very ample (see \cite[Theorem II, 7.6]{Hartshorne:1977} and \cite[Theorem 1.2.6]{Lazarsfeld:2004}) and let $s_1,\dots,s_r$ and  $t_1,\dots,t_l$ be bases of $\Gamma(X,\mcl^m)$ and $\Gamma(X,\mcl^{m+1})$, respectively. One can check that the radical of the ideal generated by $x_{ij}=s_i t_j$, for $1\le i\le r$ and $1\le j\le l$, is $S_{\geqslant1}$ and that $t_j^2/x_{ij}$ in $S_{x_{ij}}$ is a unit of degree 1. 

For the converse, by computing global sections using the \v{C}ech cover associated
to the sequence $\boldsymbol{x} = x_0, \dots, x_n$, one gets an exact sequence
\[
0\longrightarrow \hh_{\fm}^0(S)\longrightarrow 
    S\xrightarrow{\ \sigma\ }\bigoplus_{j\geqslant 0}\hh^0(\proj S; \widetilde{S(j)}) 
        \longrightarrow \hh_{\fm}^1(S)_{j\geqslant 0} \longrightarrow 0\,,
\]
where $\fm\coloneqq S_{\geqslant 1}$, the homogeneous maximal ideal of $S$. Since $\depth S\ge 2$, we deduce that the map $\sigma$, which is a map of graded $k$-algebras, is an isomorphism. The hypothesis on $\boldsymbol{x}$ translates to the existence of an open cover 
\[
\proj S= D(x_0)\cup \cdots \cup D(x_n)
\]
such that one has an isomorphism of $\mathbb{Z}$-graded rings $S_{x_i}\cong [S_{x_i}]_0[t,t^{-1}]$, where $t$ has degree $1$. Using this it is easy to check that the natural map
\[
\widetilde{S(i)}\otimes \widetilde{S(j)} \longrightarrow \widetilde{S(i+j)}
\]
is an isomorphism for all integers $i,j$. Hence $\mcl\coloneqq \widetilde{S(1)}$ is invertible and $\widetilde{S(j)}\cong \mcl^{j}$. It follows that $S$ is the section ring of the ample line bundle $\mcl$ on $\proj S$.
\end{proof}

\begin{remark}(Yhee's example revisited)
\label{re:Yhee}
     Let $k$ be a field,  $S=k[\boldsymbol{x}]$ the polynomial ring over $k$ in indeterminates $\boldsymbol{x}=x_1,\dots,x_d$, and set $\fm=(\boldsymbol{x})$. Fix an integer $n\ge 2$,  set $J=\fm^n$ and $R=k\oplus J$, viewed as a subring of $S$. Then $R$ is not a section ring.

We claim that the ring $R$ satisfies the hypothesis of Corollary~\ref{co:noU}.

Indeed, the cokernel of the inclusion $\rees_J(R)\subseteq \rees_J(S)$  is $S/R$, and in particular finite dimensional as a $k$-vector space. Thus one gets the first isomorphism below:
\[
\bp_J(R) \xleftarrow{\ \cong\ }\bp_J(S) \cong \bp_\fm(S) \cong \bbp^{d-1}_{S}\,.
\] 
The second isomorphism holds as $\rees_J(S)$ is the $n$th Veronese subring of $\rees_\fm(S)$, since $J=\fm^n$.  In particular $\bp_J(R)$ is regular.  The exceptional fiber of the blowup is $E = \proj(\agr_J(R))$ where 
\[
\agr_J(R)=k\oplus \frac{\fm^{n}}{\fm^{2n}} \oplus \frac{\fm^{2n}}{\fm^{3n}}\oplus \cdots .
\]
Observe that the ideal sheaf, say $\mcal I$, corresponding to $\fm^{n+1}\agr_J(R)$ is nilpotent, of order $n$, and  nonzero; the latter holds because $n\ge 2$. So $E$ is not reduced. It is irreducible because $E/\mcal I$ is the projective scheme associated to the $k$-algebra
\[
k\oplus \bigoplus_{i\geqslant 1} \frac{\fm^{ni}}{\fm^{ni+1}},
\]
which is isomorphic to the $n$th Veronese subring of $S$, so $E/\mcal I\cong \bbp^{d-1}_k$.
\end{remark}

\section{Examples: Complete intersections}
\label{se:ci}
In this section we construct complete intersection local rings, some of which are domains, that have no Ulrich modules. Let $k$ be a field and consider the $k$-algebra
\[
S\coloneqq \frac{k[s,t,x,y,z]}{(s^2-x^3,st-f(x,y,z),t^2-z^3)}
\]
where $f(x,y,z)$ is a homogeneous polynomial in $k[x,y,z]$  of the form 
\[
f(x,y,z)=y^3+ f_2(x,z)y^2+f_1(x,z)y+f_0(x,z)
\]
where each $f_i(x,z)$ is in $k[x,z]$. Set the degree of $x,y,z$ to $2$ and that of $s,t$ to $3$. Then $S$ is an $\mathbb{N}$-graded ring, with homogeneous maximal ideal  $\fm\coloneqq S_{\geqslant 2}$.

\begin{theorem}
\label{th:ci}
For any choice of $f(x,y,z)$ satisfying the condition above neither the local ring $S_\fm$ nor its $\fm$-adic completion has an Ulrich module. They are both  complete intersections of dimension $2$.
\end{theorem}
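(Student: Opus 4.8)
The plan is to treat the two assertions separately, handling the complete-intersection claim first and then applying Corollary~\ref{co:noU-geometric} for the non-existence of Ulrich modules. Write $S = Q/(g_1,g_2,g_3)$ with $Q = k[s,t,x,y,z]$ and $g_1 = s^2-x^3$, $g_2 = st-f$, $g_3 = t^2-z^3$. Krull's height theorem gives $\dim S \ge \dim Q - 3 = 2$. For the reverse inequality I would note that $s,t$ are integral over the image $\bar P$ of $P=k[x,y,z]$ in $S$, since $s^2=x^3$ and $t^2=z^3$; as $st=f\in\bar P$, one gets $S=\bar P + \bar P s + \bar P t$, so $S$ is module-finite over $\bar P$. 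The element $f^2-x^3z^3 = (st)^2-s^2t^2$ vanishes in $S$ and is a nonzero polynomial in $P$ (its $y^6$-term survives), so $\bar P$ is a proper quotient of $P$ and $\dim S = \dim\bar P \le 2$. Thus $\dim S = 2$, and since $S$ is cut out of the regular ring $Q$ by exactly $3 = \operatorname{codim}$ elements, these form a regular sequence and $S$ is a complete intersection; this property and the dimension pass to $S_\fm$ and to its $\fm$-adic completion.

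For the Ulrich part I would realize $S$ as a section ring via Lemma~\ref{le:section-ring}, applied to the two elements $x$ and $z$. Here $\depth S = 2$ because $S$ is Cohen--Macaulay of dimension $2$. To see $\sqrt{(x,z)} = S_{\geqslant 1}$, pass to $S/(x,z)$: the relations become $s^2=0$, $t^2=0$, and $st = f(0,y,0) = y^3$ (the $f_i$ vanish at $x=z=0$, being homogeneous of positive degree), whence $y^6 = (st)^2 = s^2t^2 = 0$, so $x,y,z,s,t$ are all nilpotent. Moreover $s^2=x^3$ makes $s$ a unit in $S_x$, so $s/x$ is a unit of degree $1$ there, and likewise $t/z$ is a degree-$1$ unit in $S_z$. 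Lemma~\ref{le:section-ring} then exhibits $S$ as $\sring{V,\mcl}$ with $V=\proj S$ of dimension $1$ and $\mcl$ ample.

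It remains to verify conditions (1) and (2) of Corollary~\ref{co:noU-geometric} with $a=2$. Condition (1) is immediate, since $S_0=k$ and $S_1=0$ as the algebra generators sit in degrees $2$ and $3$. For condition (2) I would compute $S/(x,y,z)S$: setting $x=y=z=0$ kills $x^3$, $z^3$, and $f$ (which has no constant term), so $S/(x,y,z)S \cong k[s,t]/(s^2,st,t^2)$, with $k$-basis $1,s,t$ concentrated in degrees $0$ and $3$. Hence $(S/(x,y,z)S)_n = 0$ for all $n\ge 4$, which says exactly that $S_n = xS_{n-2}+yS_{n-2}+zS_{n-2} = S_2\cdot S_{n-2}$ for $n\ge 4$; taking $n=j+2$ with $j\ge 2$ gives the required surjectivity. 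With both conditions verified, Corollary~\ref{co:noU-geometric} shows that neither $S_\fm$ nor its completion admits an Ulrich module.

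The main point that genuinely uses the shape of the relations—and which I would present most carefully—is condition (2), equivalently the fact that $S/(x,y,z)S$ is Artinian. It rests on $f$ having vanishing constant term together with the identity $f^2 = (st)^2 = x^3z^3$; this is the same identity responsible, through Corollary~\ref{co:noU-geometric}, for the exceptional fiber being a non-reduced (in fact squared) Cartier divisor, which is what drives the obstruction. Everything else is a short verification once Corollary~\ref{co:noU-geometric} and Lemma~\ref{le:section-ring} are in hand.
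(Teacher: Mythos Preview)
Your proof is correct and follows the same three-step strategy as the paper: verify that $S$ is a two-dimensional complete intersection, realize $S$ as a section ring via Lemma~\ref{le:section-ring} using the parameters $x,z$ and the degree-$1$ units $s/x$, $t/z$, and then check conditions (1) and (2) of Corollary~\ref{co:noU-geometric} with $a=2$. The only cosmetic differences are that the paper bounds $\dim S$ via the associated graded ring rather than via integrality over $\bar P$, and obtains $S_2\cdot S_j = S_{j+2}$ by noting that $S$ is generated by $1,s,t$ over its second Veronese $k[x,y,z]$ rather than by computing $S/(x,y,z)S$; these are equivalent packagings of the same observation.
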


\begin{proof}
It suffices to  verify that the graded $k$-algebra $S$ has the following properties:
\begin{enumerate}[\quad\rm(a)]
    \item 
        It is a complete intersection ring, with $\dim S=2$;
    \item 
        It satisfies the conditions (1) and (2) of Corollary~\ref{co:noU-geometric}, with $a=2$;
    \item 
        It is the section ring of an ample line bundle on a projective curve. 
\end{enumerate}

Proof of (a): It is clear that $\agr_\fm(S)$ is a quotient of the ring 
\[
\frac{k[s,t,x,y,z]}{(s^2,st,t^2,x^3z^3-f(x,y,z)^2)}\,.
\]
Since $f(x,y,z)$ is monic in $y$ we get that $\dim S\le 2$. On the other hand, $S$ is defined by three relations, so it must be a complete intersection ring, with $\dim S=2$. 

Proof of (b): Evidently $S_0=k$ and $S_1=0$. Let $R$ be  the second Veronese subring of $S$. It is clear from the relations defining $S$ that $R$ is the $k$-sub-algebra of $S$ generated by $x,y,z$. As an $R$-module, $S$ is generated by $1,s,t$. It follows that $S_2\cdot S_j=S_{j+2}$ for $j\ge 2$.
 
 Proof of (c): We use the criterion from Lemma~\ref{le:section-ring}. The elements $x,z$ form a homogeneous system of parameters for $S$. The element $s/x$ is of degree $1$ in $S_x$, and invertible, with inverse $s/x^2$. By symmetry, $t/z$ is a unit of degree $1$ in $S_z$.

This completes the proof of the theorem.
\end{proof}

The preceding theorem provides a family of complete intersections with no Ulrich modules. Special values of $f(x,y,z)$ yield rings with further interesting properties.

\subsection*{The case $f(x,y,z)=y^3+x^2z$} Then the ring in question is a domain
\[
S\coloneqq \frac{k[s,t,x,y,z]}{(s^2-x^3,st-(y^3+x^2z),t^2-z^3)}.
\]
For $k=\mathbb{Q}$, this claim can be verified using, for instance, Macaulay2~\cite{M2}. Here is a sketch of an argument for general $k$: Since $x$ is a parameter in $S$, it is not a zero-divisor, so it suffices to check that the ring $S[x^{-1}]$ is a domain. One has
\[
S[x^{-1}] = \frac{A[x^{-1},s]}{(s^2-x^3)} \quad\text{where} \quad 
    A= \frac{k[x,y,z]}{((y^3+x^2z)^2-x^3z^3)}
\]
It thus suffices to verify that $A$ is a domain and $x^3$ does not have a square root in the fraction field of $A$.

The property that $A$ is a domain translates to the irreducibility of the polynomial $(y^3-x^2z)^2-x^3z^3$. Specializing $y=1$, it suffices to verify that the polynomial
\[
x^4z^2 - x^3z^3 - 2x^2z +1
\]
is irreducible. This can be  done easily using the Newton polytope associated to this polynomial, as  described in Gao~\cite{Gao:2001}: The polytope is a triangle with vertices
\[
v_0=(0,0), v_1=(3,3), v_2=(4,2).
\]
The greatest common divisor of the coordinates of $v_0-v_1$ and $v_0-v_2$ is $1$, so the polytope is integrally indecomposable, by \cite[Corollary~4.5]{Gao:2001}.  We conclude that polynomial in question is irreducible; see \cite[p.~507]{Gao:2001}.

Finally, if $x^3$ has a square root in the fraction field of $A$, then so is $x$. Thus there exist nonzero elements $a, b \in A$ such that $a^2x=b^2$. Since $A$ can be standard graded, with degrees of $x,y,z$ equal to $1$, this is not possible since the highest degree term in $a^2x$ has odd degree while the highest degree term in $b^2$ has even degree.

\subsection*{The case $f(x,y,z)=y^3$ and $k$ is algebraically closed}
The complete intersection in question is then
\[
S\coloneqq \frac{k[s,t,x,y,z]}{(s^2-x^3,st-y^3,t^2-z^3)}
\]
and this is closely connected to Yhee's construction~\cite{Yhee:2023} of domains (not Cohen-Macaulay) that have no Ulrich modules.

Indeed, according to Macaulay2~\cite{M2}, the ring $S$ has three minimal prime ideals. Evidently, for any of minimal prime $\mathfrak{p}$ the domain $S/\mathfrak{p}$ (or rather it localization at the maximal ideal) cannot have Ulrich modules, for any Ulrich module over it would be Ulrich over $S_\fm$. Again according to Macaulay2~\cite{M2}, one of the prime ideals is
\[
\mathfrak{p}=(s^2-x^3,st-xyz, z^3-t^2, y^2-xz, sz^2-xyt, yzs-x^2t)
\]
as an ideal in the underlying polynomial ring. The assignment 
\[
s\mapsto u^3, t\mapsto v^3, x\mapsto u^2, y\mapsto uv, z\mapsto v^2
\]
induces an isomorphism of $k$-algebras
\[
S/\mathfrak{p} \cong k[u^2,uv,v^2,u^3,v^3]\subset k[u,v].
\]
This subring differs from the subring $k+ (u,v)^2$ considered in \cite{Yhee:2023}, as discussed in the previous section, only in containing $u^2v,uv^2$, and one can argue as in \cite{Yhee:2023} (or as in the previous section) to deduce that $S/\mathfrak{p}$ does not have Ulrich modules.

The same analysis carries over to the quotient of $S$ by the other minimal primes. Summing up, the complete intersection ring $S$ above can be thought of being obtained by gluing three copies of the (non Cohen-Macaulay) ring constructed by Yhee in \cite{Yhee:2023}, as discussed in the previous section. 

\begin{remark}
More generally, consider the $k$-algebra
    \[
     S \coloneqq \frac {k[s_1,\dots,s_n,x_{ij}\mid 1\le i\le j\le n]}{(s_is_j-x_{ij}^3\mid 1\le i\le j\le n)}\,.
    \]
Then $S_{\fm}$ is a complete intersection of dimension $n$, and an argument similar to the proof of Theorem~\ref{th:ci} shows that $S_{\fm}$ does not admit Ulrich modules. Moreover the ideal $(x_{11},\dots,x_{nn})$ is a minimal reduction of $\fm$; it is easy to check the other variables are integral over this ideal. Thus 
\begin{align*}
e(S_{\fm}) 
        & =\length_S\left(\frac S{(x_{11},\dots,x_{nn})}\right) \\
        & =\length_S\left(\frac{k[s_1,\dots,s_n, x_{ij} \mid 1\le i < j \le n]}{(s_1^2,\dots,s_n^2, s_is_j-x_{ij}^3\mid 1\le i <j\le n)}\right)\\
& = 2^n \cdot 3^{\binom{n}{2}}.
\end{align*}
where the last equality follows because the ring in question is a finite free extension of $k[s_1,\dots,s_n]/(s_1^2,\dots,s_n^2)$ of degree $3^{\binom{n}{2}}$. Note that $S_{\fm}$ is not a strict complete intersection, as the product of the $\fm$-adic orders of the defining equations is $2^{\binom{n+1}{2}}$. One thus gets a family of complete intersections with no Ulrich modules, and for which the difference between the orders and multiplicity is arbitrarily large. 
\end{remark}

\section{Examples: Gorenstein normal domains}
\label{se:gorenstein}
In this section we construct Gorenstein normal domains with no Ulrich modules.

Let $C$ be a smooth non-hyperelliptic curve of genus $g\ge 3$ (for example, a smooth plane quartic) over an algebraically closed field $k$. For simplicity, we assume that char$(k)=0$. Let $\mcl$ be a non-effective theta characteristic on $C$; in other words
\[
\mcl^{\otimes 2} \cong \omega_C \quad\text{and}\quad \Gamma(C,\mcl) = 0\,.
\]
It is a classical result \cite{Mumford:1966} (see also \cite{Auffarth/Pirola/Manni:2017}) that non-effective theta characteristic exists on any smooth curve. Consider, for example, the curve
\[
\proj \frac{\mathbb{C}[x,y,z]}{(x^4+y^4+z^4)}\,.
\]
For any plane quartic, there are $2^{2g} = 64$ theta characteristics, but only $28$ of them are effective (corresponding to the $28$ bitangents); see \cite[Exercise IV, 2.3(h)]{Hartshorne:1977}.

\begin{theorem}
   \label{th:gorenstein}
Set $S\coloneqq\sring{C,\mcl}$ and let $\fm$ be the homogeneous maximal ideal of $S$. Then neither the local ring $S_\fm$ nor its completion at $\fm$ admits an Ulrich module. They are both  Gorenstein and normal.
\end{theorem}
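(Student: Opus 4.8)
The plan is to deduce the non-existence of Ulrich modules from Corollary~\ref{co:noU-geometric}, applied with $V=C$, the ample line bundle $\mcl$, and $a=2$; the two ring-theoretic assertions will then follow from standard properties of section rings together with the defining relation $\omega_C\cong\mcl^2$. First I would check the hypotheses of Corollary~\ref{co:noU-geometric}. Condition (1) with $a=2$ asks that $S_0=k$ and $S_1=0$: the former holds because $C$ is integral and proper over the algebraically closed field $k$, so $\Gamma(C,\mco_C)=k$, and the latter is exactly the non-effectivity $\Gamma(C,\mcl)=0$. Condition (2) is the surjectivity of $\Gamma(C,\mcl^2)\otimes_k\Gamma(C,\mcl^j)\to\Gamma(C,\mcl^{j+2})$ for all $j\ge 2$. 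Since $\mcl^2\cong\omega_C$ and $C$ is non-hyperelliptic of genus $g\ge 3$, the bundle $\omega_C$ is very ample and embeds $C$ canonically in $\bbp^{g-1}_k$ with $\mco_C(1)=\omega_C$; this surjectivity is the heart of the argument, treated next.

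I would verify condition (2) by separating even and odd $j$, using $\mcl^{2m}\cong\omega_C^m$ and $\mcl^{2m+1}\cong\omega_C^m\otimes\mcl$. For even $j=2m$ (with $m\ge1$) the map is $\Gamma(\omega_C)\otimes\Gamma(\omega_C^m)\to\Gamma(\omega_C^{m+1})$, which is surjective by Max Noether's theorem, that the canonical ring of a non-hyperelliptic curve is generated in degree one (see \cite{Hartshorne:1977}). For odd $j=2m+1$ (with $m\ge1$) the map is $\Gamma(\omega_C)\otimes\Gamma(\mcl(m))\to\Gamma(\mcl(m+1))$, where $\mcl(m)\coloneqq\mcl\otimes\omega_C^m$. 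Here I would invoke Castelnuovo--Mumford regularity with respect to the canonical embedding: the key computation is that $\mcl$ is $1$-regular, because $\hh^1(C,\mcl)\cong\hh^0(C,\omega_C\otimes\mcl^{-1})^\vee=\hh^0(C,\mcl)^\vee=0$ by Serre duality, the identity $\omega_C\otimes\mcl^{-1}\cong\mcl$, and non-effectivity; the higher vanishing needed for regularity is automatic since $C$ is a curve. Regularity then gives that $\Gamma(\mco_{\bbp^{g-1}}(1))\otimes\Gamma(\mcl(m))\to\Gamma(\mcl(m+1))$ is surjective for all $m\ge1$ (see \cite{Lazarsfeld:2004}), and $\Gamma(\mco_{\bbp^{g-1}}(1))=\Gamma(\omega_C)$ under the canonical embedding. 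Corollary~\ref{co:noU-geometric} then shows that neither $S_\fm$ nor its completion admits an Ulrich module.

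It remains to record the ring-theoretic properties. The ring $S$ is a domain since $C$ is integral. Being the section ring of an ample line bundle on a projective scheme, $S$ has dimension $2$ and is saturated, so $\hh^0_\fm(S)=\hh^1_\fm(S)=0$ and $\hh^2_\fm(S)_n\cong\hh^1(C,\mcl^n)$ for all $n$ (see \cite{Hyry/Smith:2003}); hence $\depth_\fm S\ge 2=\dim S$ and $S$ is Cohen--Macaulay. For normality I would use Serre's criterion: $S_2$ holds by Cohen--Macaulayness, while $R_1$ holds because $\proj S\cong C$ is smooth and the only non-regular point of $\spec S$ is the vertex $\fm$, of codimension $2$. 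For the Gorenstein property I would compute the graded canonical module: by graded local duality and Serre duality on $C$,
\[
(\omega_S)_n \cong \hh^2_\fm(S)_{-n}^\vee \cong \hh^1(C,\mcl^{-n})^\vee \cong \hh^0(C,\omega_C\otimes\mcl^n)\,,
\]
and since $\omega_C\cong\mcl^2$ this equals $\hh^0(C,\mcl^{n+2})=S_{n+2}$. Thus $\omega_S\cong S(2)$ is free of rank one, so $S$ is Gorenstein. As $S$ is an excellent (finitely generated graded) $k$-algebra, both properties pass to $S_\fm$ and to its $\fm$-adic completion; in particular the completion is normal, hence local and therefore integral, as well as Gorenstein.

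The main obstacle is condition (2), the surjectivity of the multiplication maps for all $j\ge 2$, and it is precisely here that both hypotheses on $(C,\mcl)$ enter and neither can be dropped: non-hyperellipticity makes the canonical map an embedding and forces Max Noether generation in the even range (for hyperelliptic curves the canonical ring is not generated in degree one), while non-effectivity $\Gamma(C,\mcl)=0$ is exactly what makes $\mcl$ be $1$-regular and thus drives the odd range. The boundary cases deserve the most care: $j=2,4$ in the even range, where regularity of $\mco_C$ alone is insufficient and Max Noether is essential, and $j=3$ when $g=3$, which is the self-dual boundary of the regularity estimate. I would therefore double-check these explicitly, for instance on the smooth plane quartic, where $\omega_C=\mco_C(1)$ renders every cohomological computation transparent.
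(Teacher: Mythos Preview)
Your proof is correct and follows essentially the same approach as the paper: both apply Corollary~\ref{co:noU-geometric} with $a=2$, handle the even part of condition~(2) via Max Noether's theorem, and the odd part via Castelnuovo--Mumford regularity (your ``$\mcl$ is $1$-regular'' is equivalent to the paper's ``$\mcl^3$ is $0$-regular'' with respect to $\omega_C$). The only difference is cosmetic: the paper dispatches normality and Gorensteinness by citing \cite{Smith:1997} and \cite{Kollar:2013}, whereas you argue directly via Serre's criterion and an explicit computation of $\omega_S\cong S(2)$, which is a perfectly good (and arguably more illuminating) alternative.
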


\begin{proof}
To begin with $S$ is normal because it is the section ring of smooth projective variety, with respect to an ample invertible sheaf; see~\cite[Section~1]{Smith:1997}. It is also Gorenstein by \cite[Proposition 3.14(4)]{Kollar:2013}. These conditions are then inherited by $S_\fm$ and its completion. For the rest, we show that the conditions in Corollary \ref{co:noU-geometric} are satisfied with $a=2$. 

Indeed, since $C$ is not hyperelliptic, the canonical ring $\sring{C,\omega_C}$ is generated in degree 1 by a classical theorem of Max Noether. As $\mcl^{\otimes 2}\cong \omega_C$, we deduce that the multiplication map $\mathrm{Sym}^m(S_2)\to S_{2m}$ is surjective for all integers $m$, hence $S_2\otimes_k S_{j}\to S_{2+j}$ is also surjective for all even integers $j\ge 2$. On the other hand, the line bundle $\mcl^{\otimes 3}$ on $C$ is Castelnuovo-Mumford $0$-regular (with respect to the very ample line bundle $\omega_C$). To see this, it suffices to note that $\chi(\mcl) = 0$ by Riemann-Roch, and hence
\[
\hh^1(C,\mcl^{\otimes 3}\otimes \omega_C^{-1}) = \hh^1(C,\mcl) = \hh^0(C,\mcl) = 0\,.
\]
By \cite[Theorem 1.8.3]{Lazarsfeld:2004}, it follows that the multiplication map $S_3 \otimes \mathrm{Sym}^m(S_2)\to S_{2m+3}$ is also surjective for all integers $m$, which in turn implies the surjectivity of $S_2\otimes_k S_j\to S_{2+j}$ for all odd integers $j\ge 3$. This verifies the condition (2) from Corollary \ref{co:noU-geometric}, while condition (1) is clear from the choice of $\mcl$.
\end{proof}

\begin{remark}
With $g$ the genus of the curve $C$, a direct computation gives that the Hilbert series of the ring $S$ in Theorem \ref{th:gorenstein} is
\[
\frac{(1 - 2t + (g+1)t^2 - 2t^3 + t^4)}{(1-t)^2}
\]
 If $S$ were a complete intersection, the numerator would be  product of cyclotomic polynomials. One can check that this is not the case.

The ring $S_\fm$ in Theorem~\ref{th:gorenstein} is also not a rational singularity. In fact, rational surface singularities have minimal multiplicities---see \cite[Theorem on page 94]{Reid:1997}---and such rings admit Ulrich modules, see \cite[Proposition 2.5]{Brennan/Herzog/Ulrich:1987}. It is an interesting question whether there exist rational singularities with no Ulrich modules.
\end{remark}

\begin{ack}
Many thanks to Kevin Tucker for helpful conversations concerning this work, and a referee for their suggestions. The authors were partly supported by National Science Foundation  (NSF) grants DMS-2001368 (SBI);  DMS-2302430 (LM); DMS-1952366 (LM); DMS-2200732 (MW); DMS-2240926 (ZZ), and DMS-2234736 (ZZ). Ma was also partly supported by a Sloan fellowship and by a grant from the Institute for Advanced Study School of Mathematics. Zhuang was also partly supported by a Clay research fellowship, as well as a Sloan fellowship. 

This material is based upon work supported by the NSF under Grant No. DMS-1928930 and by the Alfred P. Sloan Foundation under grant G-2021-16778, while the first three authors were in residence at the Simons Laufer Mathematical Sciences Institute (formerly MSRI) in Berkeley, California, during the Spring 2024 semester.  
\end{ack}

\providecommand{\bysame}{\leavevmode\hbox to3em{\hrulefill}\thinspace}
\providecommand{\MR}{\relax\ifhmode\unskip\space\fi MR }
\providecommand{\MRhref}[2]{%
  \href{http://www.ams.org/mathscinet-getitem?mr=#1}{#2}
}
\providecommand{\href}[2]{#2}

\end{document}